\documentclass[11pt]{amsart}
\usepackage{amsmath}
\usepackage[utf8]{inputenc}
\usepackage{amssymb}
\usepackage{amsopn}
\usepackage{epsfig}
\usepackage{amsfonts}
\usepackage{latexsym}
\usepackage{graphicx}
\usepackage{enumerate}
\usepackage{color}
\usepackage{tikz, subfigure}
\setlength{\parskip}{0.3\baselineskip}
\newtheorem{theorem}{Theorem}[section]

\newtheorem{proposition}[theorem]{Proposition}
\newtheorem{corollary}[theorem]{Corollary}

\theoremstyle{definition}

\newtheorem{conjecture}[theorem]{Conjecture}
\theoremstyle{remark}
\newtheorem{remark}[theorem]{Remark}

\numberwithin{equation}{section}







\begin{document}
\title{ On some fractal sets only containing irrationals}
\author[K. Jiang]{Kan Jiang}
\address[K. Jiang]{Department of Mathematics, Ningbo University,
People's Republic of China}
\email{jiangkan@nbu.edu.cn}
\date{\today}
\subjclass[2010]{Primary: 28A80, Secondary:28A78}
\begin{abstract}
In this paper, we  prove that many fractal sets  generated by the associated  dynamical systems only contain irrationals. We give two applications. Firstly,  we explicitly construct some overlapping self-similar sets which  only consist of irrationals. Secondly, we prove that  some scaled   univoque sets only contain irrationals.
\end{abstract}
\maketitle
\section{Introduction}
Kurt Mahler \cite{Mahler1984} posed the following famous conjecture:
\begin{conjecture}
Any number from the ternary Cantor set is either a rational or a transcendental number.
\end{conjecture}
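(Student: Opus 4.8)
Write $C=\set{\sum_{n\ge 1}a_n 3^{-n}:a_n\in\{0,2\}\ \forall n}$ for the ternary Cantor set. Because the statement explicitly permits rational points, it is equivalent — and this is the form I would attack — to prove the sharper assertion that $C$ contains no algebraic irrational, i.e. that $C\cap\overline{\mathbb{Q}}$ reduces to the (known) rational points such as $0,\tfrac14,\tfrac34,1$. The plan is therefore to assume for contradiction that some $\xi\in C$ is algebraic of degree $d\ge 2$, and to extract from the rigid digit structure of $\xi$ a Diophantine inequality that no algebraic number can satisfy.

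The first lever is the self-similarity of $C$. Reading off the leading ternary digit gives a shift relation: if $\xi\in C$ then $T\xi\in C$, where $T$ is whichever of the two rational affine maps $x\mapsto 3x$, $x\mapsto 3x-2$ returns a point of $[0,1]$. Since these maps preserve the degree of an algebraic number, the whole forward orbit $\set{T^k\xi:k\ge 0}$ consists of degree-$d$ algebraic points of $C$; and as $\xi$ is irrational its expansion is not eventually periodic, so the orbit is infinite with all terms distinct. One is tempted to finish by a Northcott-type finiteness argument, but this fails immediately: the heights grow geometrically, roughly $\le c^kH$, so bounded degree alone gives no contradiction. Quantitative input is unavoidable, and the natural source is the approximation data. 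Each truncation $p_N/3^N=\sum_{n\le N}a_n 3^{-n}$ obeys $0\le \xi-p_N/3^N\le 3^{-N}$, and the \emph{missing} digit $1$ constrains these approximants $3$-adically. The aim is to assemble, from the truncations and their images under $T^k$, a fixed system of linear forms in $1,\xi,\dots,\xi^{d-1}$ with archimedeanly and $3$-adically controlled coefficients that is small infinitely often, and to feed it into the $p$-adic Schmidt Subspace Theorem with $p=3$, exactly in the spirit of the Adamczewski--Bugeaud theorem that every irrational automatic, indeed every low-complexity, base-$b$ number is transcendental.

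The hard part — and the reason this conjecture has stood open — is that the mechanism does not close for a \emph{generic} point of $C$. The Cantor truncations give approximations of quality only $|\xi-p/q|\le q^{-1}$ with $q=3^N$, nowhere near the exponent $2$ forbidden by Roth, so elementary approximation yields nothing. The Subspace Theorem can still win when the $0/2$-sequence $(a_n)$ contains long repeated blocks (a \emph{stammering} hypothesis), because repetition manufactures linear forms small at small height; but a typical element of $C$ has a digit string of maximal subword complexity and no such repetition, so the forms one builds are never small often enough, nor at small enough height, to trigger a contradiction. The genuine obstruction is structural: membership in $C$ is a purely combinatorial condition (one forbidden symbol along the entire tail), whereas all current transcendence machinery detects algebraicity only through approximation quality or through subword complexity, neither of which is forced by the missing-digit condition. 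A complete proof would require a transcendence criterion sensitive to forbidden digits \emph{per se} — an arithmetic statement to the effect that the base-$3$ expansion of an algebraic irrational cannot omit a digit from some tail — and supplying such a criterion is precisely the open core of Mahler's problem. I regard this final step as the true obstacle, not a routine verification, and I would expect the realistic outcome of this line of attack to be partial results (e.g. ruling out badly approximable or stammering algebraic points of $C$) rather than the full conjecture.
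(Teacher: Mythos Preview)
The paper does not prove this statement: it is quoted as Mahler's famous \emph{open} conjecture, explicitly described as ``still far from being solved,'' and serves only as motivation for the paper's actual results on translates and dilates of nowhere dense invariant sets. There is therefore no proof in the paper to compare your proposal against.

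Your write-up is not a proof but a candid research outline, and you correctly diagnose why the problem resists current methods: Cantor truncations give approximations of exponent only $1$, far below Roth; the Adamczewski--Bugeaud Subspace-Theorem machinery needs low subword complexity or stammering, which a generic $\{0,2\}$-sequence lacks; and Northcott finiteness fails because heights along the $T$-orbit blow up. Your conclusion that the missing ingredient is a transcendence criterion sensitive to a forbidden digit \emph{per se} is exactly the consensus view. So there is no ``gap'' to name beyond the one you already name yourself: the final step is the open conjecture, and your proposal does not (and does not claim to) supply it.
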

This conjecture is elegant. It is a classical problem in  Diophantine  approximation.   To the best of our knowledge, this conjecture is still far from being solved. However, it motivates us to consider the numbers in fractal sets.  For instance, it is natural to ask when some fractal sets only consist of irrationals or transcendental numbers.   We introduce some related work on this topic.
Jia, Li and Jiang \cite{Jia2021} proved the following result:
\begin{proposition}
Let $E\subset \mathbb{R}$ be a set  with zero  Lebesgue  measure. Then for  Lebesgue almost every $t$, we have that
$$E+t=\{x+t:x\in E\} $$ is a set only containing    irrationals or    transcendental numbers.
\end{proposition}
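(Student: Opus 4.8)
The plan is to reduce the statement to the elementary fact that $\mathbb{Q}$ (and, more generally, the set of real algebraic numbers) is countable, combined with the translation invariance of Lebesgue measure. Fix an enumeration $\mathbb{Q}=\{q_1,q_2,\dots\}$. I would first observe that, for a given real number $t$, the translate $E+t$ contains a rational number if and only if there exist $x\in E$ and $n\in\mathbb{N}$ with $x+t=q_n$, i.e.\ if and only if $t\in q_n-E:=\{q_n-x:x\in E\}$ for some $n$. Hence the set of ``bad'' parameters is exactly
\[
  B:=\bigcup_{n\in\mathbb{N}}(q_n-E).
\]

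Next I would estimate the Lebesgue (outer) measure of $B$. Each set $q_n-E$ is the image of $E$ under the affine map $x\mapsto q_n-x$, which is an isometry of $\mathbb{R}$ (a reflection composed with a translation); since Lebesgue outer measure is invariant under isometries, $|q_n-E|=|E|=0$ for every $n$. By countable subadditivity of outer measure, $|B|\le\sum_{n}|q_n-E|=0$, so $B$ is a Lebesgue null set. For every $t\notin B$ the translate $E+t$ meets no $q_n$, that is, $E+t\subseteq\mathbb{R}\setminus\mathbb{Q}$, which is precisely the claimed conclusion.

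Finally, I would note that exactly the same argument applies with $\mathbb{Q}$ replaced by the still countable set $\overline{\mathbb{Q}}\cap\mathbb{R}$ of real algebraic numbers: the corresponding exceptional set $\bigcup_{a\in\overline{\mathbb{Q}}\cap\mathbb{R}}(a-E)$ is again a countable union of isometric copies of $E$, hence null, and for $t$ outside it the set $E+t$ consists only of transcendental numbers. This yields the ``or transcendental numbers'' part of the statement.

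I do not expect a genuine obstacle here; the argument is essentially forced. The only points that require a little care are to phrase everything in terms of outer measure, so that no measurability hypothesis on $E$ beyond $|E|=0$ is needed, and to correctly identify the exceptional parameter set $B$ as a countable union of isometric images of $E$ rather than something larger.
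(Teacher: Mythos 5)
Your argument is correct and complete: the exceptional set of parameters is exactly $\bigcup_{n}(q_n-E)$, a countable union of isometric copies of a null set, hence null by countable subadditivity of outer measure, and the same reasoning with the countable set of real algebraic numbers gives the transcendental version. The paper itself does not prove this proposition (it is quoted from the reference of Jia, Li and Jiang), but your proof is the standard and essentially forced argument, and your care in using outer measure so that no measurability of $E$ is needed is appropriate.
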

Moreover, they considered some special Cantor sets, and constructed explicitly $t$'s such that $E+t$ only contains irrationals.
In this paper, we shall analyze some more general fractal sets, including invariant sets of some dynamical systems, self-similar sets  with overlaps and so forth.
We prove that these fractal sets only contain irrational  numbers.
Before  introducing  the main results of this paper, we give  some definitions and notation.
 Let $m\in \mathbb{N}_{\geq 2}$ and  $t\in (0,1)$. We say that  $t$ has an $m$-adic expansion  if there exists some $(t_k)\in\{0,1,2,\cdots,m-1\}^{\mathbb{N}}$ such that
$$t=\sum_{k=1}^{\infty}\dfrac{t_k}{m^k}.$$  Given any non-empty word $(x_1x_2\cdots x_k)\in \{0,1,2,\cdots,m-1\}^{k},k\geq 1.$ If there exists some $k_0\in\mathbb{N}$ such that
$$t_{k_0+1}t_{k_0+2}\cdots t_{k_0+k}=x_1x_2\cdots x_k,$$ then we call  $(t_k) $  a universal expansion of $t$.

We say that   an expansion $(t_k)$ is normal if
 for any finite nonempty word $\textbf{a}$  taken from $$\{0,1,2,\cdots, m-1\}^{*}=\cup_{k=1}^{\infty} \{0,1,2,\cdots, m-1\}^{k}$$ we have  that
 the limiting frequency of the appearances of  $\textbf{a}$ as a subword of $(t_k)$  is $$\dfrac{1}{m^{|\textbf{a}|}},$$ where $|\textbf{a}|$ denotes the length of $\textbf{a}.$ If $x$ has a normal expansion, then we call $x$ a normal number.
 We define the following sets.
$$A\pm t=\{x\pm t:x\in A\}, tA=\{tx: x\in A\setminus \{0\}\},  t^{-1}A=\{t^{-1}x: x\in A\setminus \{0\}\}.$$
In what follows, we will use  similar notation.
 Now we state the main result of this paper.
\begin{theorem}\label{Main}
Let $m\in \mathbb{N}_{\geq 2}.$ We define
$$T:[0,1)\to [0,1)$$
by $$T(x)=mx\mod 1.$$
Let $A$ be  any  nowhere dense invariant set of $T$.  For
 any $t\in (0,1)$ such that
$t$ has a universal $m$-adic expansion,  then we simultaneously  have
$$A-t \subset  \mathbb{Q}^c, A+t\subset \mathbb{Q}^c, \dfrac{A}{t}\subset  \mathbb{Q}^c.$$
Moreover, if $t\in (1,\infty)$ and   $(1/t)$ has a universal $m$-adic expansion, then
$$tA\subset \mathbb{Q}^c.$$
In particular, for  Lebesgue almost every $t\in (0,1)$, we have the above inclusions.
\end{theorem}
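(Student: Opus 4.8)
The plan is to reduce all four inclusions to a single statement about density of $T$-orbits, and then to use the invariance of $A$ to turn such density into a contradiction with nowhere-density. Since $A$ is invariant we have $T(A)\subseteq A$, so for every $a\in A$ the forward orbit $\{T^n a:n\ge 0\}$ lies in $A$, hence in $\overline A$; as $A$ is nowhere dense, $\overline A$ has empty interior, so no point of $A$ can have dense $T$-orbit. Now suppose for contradiction that some $a\in A$ has $a-t\in\mathbb{Q}$; writing $a=t+q$ with $q\in\mathbb{Q}$, it is enough to show the orbit of $t+q$ ($=a\in[0,1)$) is dense. The remaining cases reduce to statements of the same shape: $a+t=q\in\mathbb{Q}$ gives $a=(1-t)+(q-1)$, and the $m$-adic expansion of $1-t$, obtained from that of $t$ by $d\mapsto m-1-d$, is again universal; if some $a\in A\setminus\{0\}$ has $a/t\in\mathbb{Q}$ then $a=qt$ with $q=a/t\ne0$; and, for $t>1$ with $1/t$ universal, if some $a\in A\setminus\{0\}$ has $ta\in\mathbb{Q}$ then $a=q\cdot(1/t)$ with $q=ta\ne0$. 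Hence everything follows once we prove: \emph{if $u\in(0,1)$ has a universal $m$-adic expansion and $q\in\mathbb{Q}$ (with $q\ne0$ in the multiplicative cases), then the $T$-orbit of $u+q$, respectively of $qu$, is dense in $[0,1)$.}

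The combinatorial heart is the lemma: \emph{if $(u_k)$ is universal, then for every finite word $w$, every $p\ge 1$, and every residue $\rho$, the word $w$ occurs in $(u_k)$ starting at some index $\equiv\rho\pmod p$.} I would prove it by applying universality to a single tailored word $B=w\,c_1\,w\,c_2\cdots c_{p-1}\,w$ consisting of $p$ copies of $w$ separated by arbitrary blocks $c_i$ whose lengths are all $\equiv 1-|w|\pmod p$: inside any occurrence of $B$ the starting indices of the $p$ copies of $w$ run through all residues modulo $p$, so one of them hits $\rho$. Applying this to words whose length is a multiple of $p$ (the base-$m^p$ cylinders, read at block-aligned positions) shows that a universal point, and every tail of it, has dense orbit under $T^p$. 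The additive case then follows: for $q=a/b$ the sequence $(\{m^n q\})_n$ is eventually periodic with some period $p$, say with eventual cycle $r_0,\dots,r_{p-1}$, so for all large $n$ one has $T^n(u+q)=\{T^n u+r_{n\bmod p}\}$; restricting to $n\equiv 0\pmod p$ and using the density of the $T^p$-orbit of a tail of $u$ to make $T^n u$ approximate $z-r_0$, we make $T^n(u+q)$ approximate an arbitrary $z\in[0,1)$.

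The main obstacle is the multiplicative case. Writing $q=a/b$ in lowest terms and $m^n u=M_n+\theta_n$ with $M_n\in\mathbb{Z}$ and $\theta_n=T^n u$, one computes $T^n(qu)=\bigl\{\tfrac{aM_n\bmod b}{b}+\tfrac a b\,\theta_n\bigr\}$, so now the ``constant'' is coupled to $n$ through $M_n\bmod b$. I would prove the stronger fact that $\{(M_n\bmod b,\ \theta_n):n\ge 0\}$ is dense in $(\mathbb{Z}/b)\times[0,1)$. Factor $b=b_1b_2$ with $b_1$ composed of primes dividing $m$ and $\gcd(b_2,m)=1$; once $b_1\mid m^{\ell_1}$ one has $M_n\equiv\mathrm{val}(\text{last }\ell_1\text{ digits})\pmod{b_1}$, a suffix-controlled quantity, while modulo $b_2$ the recursion $M_{n+1}\equiv mM_n+u_{n+1}$ is a permutation automaton on $\mathbb{Z}/b_2$ in which reading any block whose length is a multiple of $p:=\mathrm{ord}_{b_2}(m)$ merely adds that block's value mod $b_2$. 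After padding $w$ so that $|w|$ is a multiple of $p$, I would use the Lemma to locate a pattern $(\pi w)\,\delta\,(\pi w)\,\delta\cdots\delta\,(\pi w)$ with $b_2$ copies, where $\pi$ (length a multiple of $p$) sets the $b_1$-component to the target value and $\delta$ is chosen so the $b_2$-component increases by exactly $1$ from one copy to the next; among the $b_2$ copies, one occurrence of $w$ is then preceded by precisely the residue class mod $b$ we want. Feeding this back into the formula for $T^n(qu)$ and using that the $b$ arcs $\tfrac c b+\tfrac a b[0,1)$ ($c=0,\dots,b-1$), reduced mod $1$, cover $[0,1)$, gives density of the orbit of $qu$.

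Finally, the ``Lebesgue almost every $t$'' assertion is immediate from Borel's normal number theorem: for Lebesgue almost every $t\in(0,1)$ the $m$-adic expansion of $t$ is normal, hence contains every finite word, i.e.\ is universal, and the three inclusions follow from the main part; likewise $1/t$ is universal for almost every $t\in(1,\infty)$, giving $tA\subseteq\mathbb{Q}^c$ there.
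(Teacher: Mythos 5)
Your argument is correct in outline, but it takes a genuinely different and substantially heavier route than the paper. The paper never tries to show that the orbit of the point $a\in A$ is dense; instead it runs the contradiction in the opposite direction: writing $t=r-x$ (resp.\ $t=r^{-1}x$) with $x\in A$, it observes that $m^k(r-x)\bmod 1=(m^kr\bmod 1)-(T^kx)\bmod 1$, and since $T^kx\in A$ while $m^kr\bmod 1$ (resp.\ $r^{-1}M_k\bmod 1$, $M_k$ the integer part of $m^kx$) takes only finitely many values determined by the denominator of $r$, the whole orbit of $t$ is trapped in a finite union of rational translates of $\pm A$ or of $r^{-1}A$ modulo $1$ --- a finite union of nowhere dense sets --- contradicting the density of $\{m^kt\bmod 1\}$ supplied by universality. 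This disposes of the division case $A/t$ in a few lines, because the integer parts $M_k$ only enter through their residue modulo the numerator of $r$, and there is no need to control \emph{which} residue occurs at which time. Your version instead proves the much stronger statement that $u+q$ and $qu$ have dense $T$-orbits, which forces you to control the joint distribution of $(M_n\bmod b,\theta_n)$; your residue-of-starting-position lemma (the word $w\,c_1\,w\cdots c_{p-1}\,w$ with gaps $\equiv 1-|w|\pmod p$) and the $b=b_1b_2$ splitting with the separator blocks $\delta$ are correct, but they are doing work the theorem does not require. What your approach buys is a genuinely stronger conclusion (density, not merely non-confinement, of the orbits of $t+q$ and $qt$); what the paper's approach buys is brevity and the fact that it generalizes painlessly to the $\beta$-expansion setting of Theorem \ref{Multiplication}, where digit combinatorics of the kind you use would be delicate.

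One detail you should patch: universality as defined only guarantees that each finite word occurs \emph{at least once}, whereas your additive case needs an occurrence of $w$ at a position in the prescribed residue class modulo $p$ \emph{beyond the pre-period} of $(\{m^nq\})_n$, and your phrase ``every tail of $u$ has dense orbit under $T^p$'' silently assumes arbitrarily late occurrences. This is easily repaired --- for every $L$ the word $B^L$ ($L$ concatenated copies of your tailored word $B$) occurs somewhere, so $B$, and hence a correctly-aligned copy of $w$, occurs at arbitrarily late positions --- but as written the step is missing.
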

\begin{remark}
A set $A$ is called an invariant set of $T$ if $T(A)\subset A.$
There are many invariant sets with respect to $T$.
 Therefore, our result gives a uniform $t$ such that
 $A\pm t, A/t$
  and
  $tA$ only contain irrationals.
  Dynamically, for any $t\in (0,1)$ with an $m$-adic universal  expansion, its orbit
$$\{T^n(t):n\geq 0\}$$ is dense in $[0,1]$. In fact, our condition can be weaken.
 We only need to assume that $(t_k)$ is ``local" universal, i.e. it suffices to assume that
 $$(t_k)=x_1x_2\cdots x_i s_1s_2s_3\cdots,$$
 where $(s_j)$ is a universal expansion and $x_1x_2\cdots x_i$ is any word from $$\{0,1,2,\cdots,m-1\}^{i} .$$ Here $i$ can be any positive integer. Roughly speaking, ``local" universal means the dynamical orbit of $t$ is locally dense in some subset of $[0,1]$. 
The last statement is trivial as it is well-known that for Lebesgue almost every $t\in (0,1)$, $t$ has an $m$-adic  universal expansion. This is due to the famous Borel's normal number theorem.
\end{remark}
If  $A$ is a special set, for instance, the survival set of some open dynamical system \cite{Glendinning-Sidorov}, then we still have the above result. For simplicity, we only consider one hole. For multiple holes, our result is still correct.
\begin{corollary}\label{cor2}
Given a hole $(a,b)\subset [0,1)$.
Let $A$ be the survival set of the following open dynamics, i.e.
$$A=\{x\in [0,1): T^n(x)\notin (a,b) \mbox{ for any }n\geq 0\}.$$  Then for any $t\in (0,1)$ such that
$t$ has a universal $m$-adic expansion, we have
$$A-t \subset  \mathbb{Q}^c,A+t\subset \mathbb{Q}^c, \dfrac{A}{t}\subset  \mathbb{Q}^c.$$
Moreover, if $t\in (1,\infty)$ and   $(1/t)$ has a universal $m$-adic expansion, then
$$tA\subset \mathbb{Q}^c.$$
\end{corollary}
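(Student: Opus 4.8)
The plan is to derive Corollary~\ref{cor2} as a special case of Theorem~\ref{Main}: it suffices to show that the survival set $A$ is a nowhere dense invariant set of $T$, after which all four inclusions follow verbatim from the theorem applied to this $A$. Invariance is immediate: if $x\in A$ then $T^{n}(x)\notin(a,b)$ for every $n\geq 0$, so $T^{n}(T(x))=T^{n+1}(x)\notin(a,b)$ for every $n\geq 0$, i.e. $T(x)\in A$; hence $T(A)\subset A$.

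The only point requiring an argument is that $A$ is nowhere dense, and here I would work with the complement
$$[0,1)\setminus A=\bigcup_{n\geq 0}T^{-n}\big((a,b)\big),$$
a union of open sets. On each of the $m^{n}$ branches $\big[j/m^{n},(j+1)/m^{n}\big)$ the map $T^{n}$ is the affine bijection $x\mapsto m^{n}x-j$ onto $[0,1)$, so $T^{-n}((a,b))$ meets this branch in the subinterval $\big((j+a)/m^{n},(j+b)/m^{n}\big)$ of length $(b-a)/m^{n}$. Given any nonempty open $I\subset[0,1)$, choosing $n$ large enough that some level-$n$ $m$-adic interval lies inside $I$ shows $I\cap T^{-n}((a,b))\neq\emptyset$; thus $\bigcup_{n\geq 0}T^{-n}((a,b))$ is dense and open, so $A$ is contained in a closed set with empty interior and is therefore nowhere dense.

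With invariance and nowhere density in hand, Theorem~\ref{Main} gives $A-t\subset\mathbb{Q}^{c}$, $A+t\subset\mathbb{Q}^{c}$, $A/t\subset\mathbb{Q}^{c}$ for every $t\in(0,1)$ with a universal $m$-adic expansion, and $tA\subset\mathbb{Q}^{c}$ whenever $t\in(1,\infty)$ and $1/t$ has a universal $m$-adic expansion, which is exactly the statement of the corollary. I do not expect a genuine obstacle here; the only mildly delicate step is the density claim above, and the multiple-hole case mentioned after the statement is handled identically, replacing $(a,b)$ by the finite union of holes throughout.
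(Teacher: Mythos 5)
Your proposal is correct and follows the same overall strategy as the paper: verify that the survival set $A$ is an invariant, nowhere dense set and then quote Theorem~\ref{Main}. The invariance argument is identical. The one substantive step --- nowhere density --- is where you diverge. The paper notes that $A$ is closed and then invokes the ergodic theorem: if $A$ contained an interval, Lebesgue-almost every point of that interval would have a dense $T$-orbit and hence would enter the hole, a contradiction. You instead argue directly that the complement $\bigcup_{n\geq 0}T^{-n}\bigl((a,b)\bigr)$ is open and dense, by observing that $T^{n}$ maps each level-$n$ $m$-adic interval affinely onto $[0,1)$, so every such interval contains a scaled copy $\bigl((j+a)/m^{n},(j+b)/m^{n}\bigr)$ of the hole, and every nonempty open set contains a level-$n$ interval for $n$ large. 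Your route is more elementary and self-contained (no appeal to ergodicity) and gives the slightly stronger explicit statement that the complement of $A$ is open and dense; the paper's route is shorter and generalizes more readily to maps where the branch structure is less explicit. Both arguments implicitly use that the hole is a genuine nonempty interval, $a<b$, which is how the statement should be read. Either way the corollary follows.
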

With   similar discussion, we are allowed to prove the following result.
\begin{corollary}\label{cor3}
Given any $m\in \mathbb{N}_{\geq 2}$.
Let $\Sigma$ be a subshift  of finite type \cite{LM} over $\{0,1,2,\cdots, m-1\}^{\mathbb{N}}$. Let $$A=\left\{\sum_{i=1}^{\infty}\dfrac{x_i}{m^i}: (x_i)\in \Sigma\right\}.$$
If $A$ is nowhere dense in $[0,1]$, then
for any $t\in (0,1)$ such that
$t$ has a universal $m$-adic expansion, we have
$$A-t \subset  \mathbb{Q}^c,A+t\subset \mathbb{Q}^c, \dfrac{A}{t}\subset  \mathbb{Q}^c.$$
Moreover, if $t\in (1,\infty)$ and   $(1/t)$ has a universal $m$-adic expansion, then
$$tA\subset \mathbb{Q}^c.$$
\end{corollary}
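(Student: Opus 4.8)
The plan is to deduce Corollary~\ref{cor3} by imitating the proof of Theorem~\ref{Main}; the only genuinely new point is that $A$ need not be exactly $T$-invariant, and one must check that this failure is harmless. Write $\pi\colon\{0,1,\dots,m-1\}^{\mathbb N}\to[0,1]$ for the coding map $\pi((x_i))=\sum_{i\ge1}x_i m^{-i}$, so that $A=\pi(\Sigma)$. One has $T(\pi(\mathbf x))=\pi(\sigma(\mathbf x))$ for every $\mathbf x$ \emph{except} when the tail $(x_2,x_3,\dots)$ is constantly $m-1$, in which case a carry occurs and $T(\pi(\mathbf x))=0$; hence in general only $T(A)\subseteq A\cup\{0\}$.

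The first step is to observe that this is enough. If $x\in A\cap\mathbb Q$, then $x\pm t$, $x/t$ and $tx$ are automatically irrational, because $t$ (and, in the last case, $1/t$) is irrational, having a universal $m$-adic expansion. So it suffices to consider $x\in A\setminus\mathbb Q$. For such an $x$ the $m$-adic expansion $(x_i)$ is unique and lies in $\Sigma$ (since $x\in\pi(\Sigma)$); moreover $(x_i)$ is not eventually equal to $m-1$, and $T^n(x)\notin\mathbb Q$ for every $n$. Hence no carry ever occurs along the orbit of $x$: $T^n(x)=\pi(\sigma^n((x_i)))$, and since $\Sigma$ is shift-invariant $\sigma^n((x_i))\in\Sigma$, so $T^n(x)\in A$ for all $n\ge0$. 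This orbit property, $\{T^n(x):n\ge0\}\subseteq A$ for every irrational $x\in A$, is all that the proof of Theorem~\ref{Main} uses about $A$, and I would now repeat that proof.

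For $A+t$: suppose for contradiction that $x+t=r\in\mathbb Q$ for some $x\in A\setminus\mathbb Q$, and write $t=r-x$. Extend $T$ to $\mathbb R$ by $T(z)=mz\bmod1$; then an immediate induction gives $T^n(t)=\bigl(T^n(r)-T^n(x)\bigr)\bmod1$ for all $n$. As $r$ is rational, $F:=\{T^n(r):n\ge0\}$ is finite, while $\{T^n(x):n\ge0\}\subseteq A$; therefore $\{T^n(t):n\ge0\}$ is contained in $\bigcup_{c\in F}\bigl((c-\overline A)\bmod1\bigr)$, a finite union of piecewise-isometric images of $\overline A$, hence nowhere dense in $[0,1]$. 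But $t$ has a universal expansion, so $\{T^n(t):n\ge0\}$ is dense in $[0,1]$ (as recalled in the Remark) — a contradiction. The case $A-t$ is identical with $t=x-r$. For $A/t$ (and likewise $tA$, working with $s:=1/t$) one supposes $x/t=r=p/q\in\mathbb Q$ with $x\in A\setminus\{0\}$, writes $t=(q/p)x$, and checks that $p\,T^n(t)-q\,T^n(x)\in\mathbb Z$ for all $n$; thus $\{T^n(t):n\ge0\}$ lies in $\bigcup_{j=0}^{p-1}\tfrac1p\bigl((q\overline A\bmod1)+j\bigr)$, which is again nowhere dense since $z\mapsto qz\bmod1$, dilation by $1/p$, and integer translations all preserve nowhere-density. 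This contradicts the density of the universal orbit, completing every case.

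The proof contains no real obstacle, only bookkeeping: one must handle the $m$-adic–rational exceptions carefully in the first step so that the orbit property is legitimately available, and, in the multiplicative cases, verify that the affine operations applied to $\overline A$ preserve nowhere-density. Once these routine points are dispatched, the density of a universal orbit forces the contradiction each time.
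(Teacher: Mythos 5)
Your proof is correct and follows the same route the paper intends: the paper's own ``proof'' of Corollary~\ref{cor3} is just the remark that it is similar to Theorem~\ref{Main}, and you carry that out faithfully. Your one genuine addition --- observing that $A=\pi(\Sigma)$ is only $T$-invariant up to carries, and disposing of the exceptional points by noting that rational $x\in A$ are harmless (since $t$ is irrational) while irrational $x\in A$ have a unique expansion in $\Sigma$ whose orbit never leaves $A$ --- is exactly the detail the paper glosses over, and you handle it correctly.
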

In Theorem \ref{Main}, if we choose $t$ with stronger property, then we are able to prove the following result. We still use the notation defined in Theorem \ref{Main}.
\begin{theorem}\label{Main1}
Let $A$ be  any  nowhere dense invariant set of $T$. Then for  any normal number $t\in (0,1)$,
$$A+it\subset \mathbb{Q}^c, \dfrac{A}{it}\subset \mathbb{Q}^c, i\in \mathbb{Z}\setminus \{0\}.$$
\end{theorem}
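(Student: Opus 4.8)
The plan is to reduce Theorem~\ref{Main1} to Theorem~\ref{Main}, using as the only external input the classical theorem of Wall that $q\alpha$ is normal in base $m$ whenever $\alpha$ is normal in base $m$ and $q$ is a nonzero rational. Fix $i\in\mathbb{Z}\setminus\{0\}$. Since $t$ is normal it is irrational, so $it\notin\mathbb{Z}$ and the fractional part $s:=\{it\}$ lies in $(0,1)$; moreover $it$ is normal in base $m$ by Wall's theorem, which is to say that the base-$m$ digit sequence of $s$ is normal, hence universal (every finite word occurs). I would first handle the division statement. Suppose for contradiction that some $x\in A\setminus\{0\}$ satisfies $x/(it)=q$ with $q\in\mathbb{Q}$; then $q\neq 0$ since $x\neq 0$, and consequently $x/t=iq\in\mathbb{Q}\setminus\{0\}$. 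But $t\in(0,1)$ is normal, hence has a universal $m$-adic expansion, so Theorem~\ref{Main} gives $A/t\subseteq\mathbb{Q}^{c}$, contradicting $x/t\in\mathbb{Q}$. Hence $A/(it)\subseteq\mathbb{Q}^{c}$.

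For the additive statement, suppose $x\in A$ and $x+it=q\in\mathbb{Q}$. Writing $it=\lfloor it\rfloor+s$ with $s\in(0,1)$ as above, we obtain $x+s=q-\lfloor it\rfloor\in\mathbb{Q}$. Since $s\in(0,1)$ has a universal $m$-adic expansion, Theorem~\ref{Main} applied with $s$ in the role of $t$ yields $A+s\subseteq\mathbb{Q}^{c}$, so $x+s\notin\mathbb{Q}$ --- a contradiction. Therefore $A+it\subseteq\mathbb{Q}^{c}$, and this holds simultaneously for every $i\in\mathbb{Z}\setminus\{0\}$ because a single normal $t$ makes every $it$ normal.

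The step I expect to be doing the real work is the appeal to Wall's theorem, which is exactly where normality --- as opposed to mere universality of the expansion --- is needed: a generic universal $t$ carries no information about the digits of $it$, whereas normality is preserved under multiplication by nonzero integers, so $\{it\}$ is again universal. Beyond that the argument is bookkeeping: the sign of $i$, the integer shift $\lfloor it\rfloor$ (harmless, since it does not affect membership in $\mathbb{Q}$), and the fact that $x=0$ is excluded from $A/(it)$ by definition while $0+it=it$ is automatically irrational. Alternatively one can avoid Wall's theorem and adapt the proof of Theorem~\ref{Main} directly: if $x\in A$ and $x+it=q$ (respectively $x=q\,it$), solve for $t$, apply $T^{n}$, and use the identity $T^{n}(\alpha)=\{m^{n}\alpha\}$ together with the eventual periodicity of the $m$-adic expansion of $q$ to see that $T^{n}(t)$ lies, for all large $n$, in one fixed finite union of images of the orbit $\{T^{k}(x):k\ge 0\}\subseteq A$ under affine homeomorphisms taken modulo $1$; such a set is nowhere dense, which is incompatible with the orbit of the universal number $t$ being dense in $[0,1]$. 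This variant uses only that $t$ has a universal $m$-adic expansion.
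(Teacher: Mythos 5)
Your proof is correct, and it rests on the same key fact as the paper's: that normality in base $m$ is preserved under multiplication by a nonzero integer. The paper packages this fact as two claims from uniform distribution theory (Weyl/Wall: $t$ is normal iff $\{m^k t \bmod 1\}$ is uniformly distributed, and uniform distribution is preserved under multiplication by a nonzero integer $s$), and then re-runs the density-versus-nowhere-dense contradiction from Theorem~\ref{Main} with $it$ in place of $t$. You instead cite Wall's theorem directly to conclude that $s=\{it\}$ has a universal (indeed normal) $m$-adic expansion, and then invoke Theorem~\ref{Main} as a black box with $s$ in the role of $t$; the integer shift $\lfloor it\rfloor$ is harmless since it preserves rationality. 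This is a cleaner reduction, and it has one genuine payoff the paper misses: for the division statement you observe that $x/(it)=q$ already forces $x/t=iq\in\mathbb{Q}$, so $A/(it)\subset\mathbb{Q}^c$ follows from Theorem~\ref{Main} for \emph{any} $t$ with a universal expansion, with no normality (and no Wall's theorem) needed at all --- the paper only proves the additive case and leaves the division case as ``similar,'' whereas your argument shows it is in fact strictly easier and holds under the weaker hypothesis. The small points you flag (irrationality of $it$ so that $\{it\}\in(0,1)$, the sign of $i$, exclusion of $x=0$) are all handled correctly.
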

In the above theorem, we let
$$A+it=\{x+it:x\in A\}, \dfrac{A}{it}=\left\{\dfrac{x}{it}:x\in A\setminus \{0\}\right\}, i\neq 0.$$
As an application of the idea of  Theorem \ref{Main}, we explicitly construct some overlapping self-similar sets  which  only consist of irrationals.
\begin{theorem}\label{Main2}
Let $K$ be the attractor of the IFS
$$\left\{f_i(x)=\dfrac{x+a_i}{q}\right\}_{i=1}^{n},$$
where
$$q\in \mathbb{N}_{\geq 3},a_i\in \mathbb{Q}\cap [0,q-1].$$  Suppose that   $K$ is nowhere dense in $[0,1]$.
Then for any $t\in (0,1)$ such that
$t$ has a universal $q$-adic expansion, we have
$$K-t\subset \mathbb{Q}^c, K+t\subset \mathbb{Q}^c, \dfrac{K}{t}\subset  \mathbb{Q}^c.$$
Moreover,  for any $t\in (1,+\infty)$ such that
$1/t$ has a universal $q$-adic expansion in $[0,1]$,  then we have
$$ tK\subset \mathbb{Q}^c. $$
\end{theorem}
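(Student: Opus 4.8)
The plan is to run the same contradiction argument that underlies Theorem~\ref{Main}, after two preliminary observations about $K$. First, since $f_i([0,1])=[a_i/q,(a_i+1)/q]\subseteq[0,1]$, the unit interval is invariant under the IFS and the cylinder $f_{i_1}\circ\cdots\circ f_{i_k}$ maps $[0,1]$ onto an interval of length $q^{-k}$; chasing a point $x\in K$ down a nested sequence of cylinders (using $K=\bigcup_i f_i(K)$) therefore produces a coding
\[
x=\sum_{j=1}^{\infty}\frac{a_{i_j}}{q^j},\qquad (i_j)\in\{1,\dots,n\}^{\mathbb{N}},
\]
and only the existence of one such code, not its uniqueness, is used, which is why the overlaps are harmless. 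Second, let $N$ be a common denominator of $a_1,\dots,a_n$ and set $b_i:=Na_i\in\mathbb{Z}_{\geq0}$. Then $NK$ is a linear image of $K$, hence nowhere dense, and writing $z:=Nx$ one checks from the coding that, for every $n\geq1$,
\[
z_n:=q^n z-M_n=N\sum_{j=1}^{\infty}\frac{a_{i_{n+j}}}{q^j}\in NK,\qquad M_n:=\sum_{j=1}^{n}b_{i_j}\,q^{\,n-j}\in\mathbb{Z}.
\]
The integers $M_n$ are the ``carries'', and the sequence $(z_n)$ is the algebraic substitute for a dynamical orbit that is not literally at our disposal because $K$ need not be $T$-invariant.

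Now suppose some $x\in K$ makes $x-t$ (respectively $x+t$) equal to a rational $r$, and write $Nr=p/Q$ in lowest terms. Substituting $x=r\pm t$ into the displayed identity and reducing $z_n$ modulo $1$, the integer $M_n$ drops out, the quantity $Nq^nr\bmod1=pq^n/Q\bmod1$ ranges only over the finite set $C:=\{0,1/Q,\dots,(Q-1)/Q\}$, and since $Nq^nt=N\lfloor q^nt\rfloor+NT^n(t)$ we have $Nq^nt\bmod1=NT^n(t)\bmod1$. Hence $NT^n(t)\bmod1$ lies, for every $n$, in one fixed nowhere dense subset of $[0,1)$, namely a finite union of translates (or reflections) of $NK$ reduced mod~$1$. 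Pulling this set back through $y\mapsto Ny\bmod1$, a map with finitely many affine branches each a homeomorphism onto its image, produces a nowhere dense set containing the entire orbit $\{T^n(t):n\geq0\}$. But $t$ has a universal $q$-adic expansion, so (as recalled in the remark after Theorem~\ref{Main}) this orbit is dense in $[0,1]$, a contradiction. This gives $K-t\subset\mathbb{Q}^c$ and $K+t\subset\mathbb{Q}^c$, and the argument here runs almost exactly as the proof of Theorem~\ref{Main}, with ``$NK$ nowhere dense'' replacing ``$A$ nowhere dense invariant''.

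For $K/t$ the scheme is the same with one extra bookkeeping step. If $x/t=r\in\mathbb{Q}\setminus\{0\}$ for some $x\in K\setminus\{0\}$, write $Nr=p/Q$ in lowest terms and substitute $x=rt$, so that $z=(p/Q)t$ and $q^nz=(p/Q)\lfloor q^nt\rfloor+(p/Q)T^n(t)$. Reducing $z_n$ modulo $1$, the rational quantity $(p/Q)\lfloor q^nt\rfloor-M_n\in\frac1Q\mathbb{Z}$ again contributes only a value in the finite set $C$, so $(p/Q)T^n(t)\bmod1$ lies in a fixed nowhere dense subset of $[0,1)$ for all $n$. Since $y\mapsto(p/Q)y\bmod1$ is once more a finite union of affine homeomorphisms, the preimage of that set is nowhere dense yet contains the dense orbit of $t$, a contradiction; hence $K/t\subset\mathbb{Q}^c$. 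Finally, for $t\in(1,\infty)$ with $1/t$ admitting a universal $q$-adic expansion, put $s:=1/t\in(0,1)$; then $tx=r$ is equivalent to $x=rs$, so applying the $K/t$ argument with $s$ in place of $t$ yields $tK\subset\mathbb{Q}^c$.

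The step that I expect to require the most care is precisely this multiplicative one, used for $K/t$ and $tK$: there is no ambient mod-$1$ map turning $x=rt$ into a shift, so one must split $q^nt=\lfloor q^nt\rfloor+T^n(t)$ by hand and verify that multiplication by the fixed rational $p/Q$ followed by reduction modulo $1$ still pulls nowhere dense sets back to nowhere dense sets. Everything else — the coding of $K$, the clearing of denominators that produces the relation $z_n\in NK$, and the density of the orbit of a number with a universal expansion — is routine.
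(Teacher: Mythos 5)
Your proposal is correct and follows essentially the same route as the paper: code $x\in K$ as $\sum_j a_{i_j}q^{-j}$ via the IFS, observe that $q^nx$ differs from an element of (a copy of) $K$ by a rational whose denominator is bounded independently of $n$, and conclude that the orbit of $t$ under $T$ would be trapped in a finite union of nowhere dense sets, contradicting the density guaranteed by the universal expansion. The only differences are bookkeeping — you clear denominators by $N$ up front and then pull back through $y\mapsto Ny\bmod 1$, where the paper keeps the rationals $q^kr-\sum t_ia_i$ and notes directly that they take finitely many values mod $1$ — and you write out the multiplicative cases that the paper leaves to the reader.
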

The main idea of Theorem \ref{Main}
 is still useful for  some non-integer expanding maps. We give another application to
 the univoque set.
Let $\beta\in(1,2)$.  Given any $x\in [0,1/(\beta-1)]$. Then there exists an expansion $(x_n)\in \{0,1\}^{\mathbb{N}}$ such that
$$x=\sum_{n=1}^{\infty}\dfrac{x_n}{\beta^n}.$$
If $(x_n)$ is unique, then we call $x$ a univoque point.  Usually, for a generic point, it has uncountably many different expansions \cite{RSK,KarmaMartijn,MK,GS}. We denote by $U_{\beta}$ all the univoque points in base $\beta.$  It is well-known that for any $\beta\in(\beta_{*},2)$, $U_{\beta}$ is a uncountable set,  where $\beta_{*}$  is the Komornik-Loreti constant \cite{Allouche,GS,Komornik}.
Now, we state the final result of this paper.
\begin{theorem}\label{Multiplication}
Let $1<\beta<2.$
 Let  $G: [0,1)\to [0,1)$ be a map defined by $G(x)=\beta x\mod 1.$
Suppose that  $A$ is a nowhere dense invariant set of $G$.
Then for any $t\in (0,1)$ such that
$$\{\beta^kt\mod 1:k\geq 0\}$$ is dense in $[0,1)$,  we have
$$tA\subset \mathbb{Q}^c.$$
Moreover, for any $t\in (1,+\infty)$ such that
$$\{\beta^k(1/t)\mod 1:k\geq 0\}$$ is dense in $[0,1)$, then
$(1/t)A\subset \mathbb{Q}^c.$

 \noindent In particular, if $A=U_{\beta}\cap \left[\dfrac{2-\beta}{\beta-1},1\right]$ and $\beta_{*}<\beta<2$, then for some appropriate $t$'s,
  $tA\subset \mathbb{Q}^c \mbox{ or }(1/t)A\subset \mathbb{Q}^c.$
 \end{theorem}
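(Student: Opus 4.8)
\emph{Plan.} I argue by contradiction, using the single structural fact that a nowhere dense $G$-invariant set cannot contain the forward orbit of a point whose orbit closure has nonempty interior: if $\overline{A}$ has empty interior and $\{G^{n}(y):n\ge 0\}\subseteq A$, then $\overline{\{G^{n}(y):n\ge 0\}}\subseteq\overline{A}$ contains no interval. Hence, for the first assertion, it suffices to show that if $x\in A\setminus\{0\}$ and $tx\in\mathbb{Q}$ then the $G$-orbit of $x$ is dense in $[0,1)$, which is impossible. So suppose $tx=r=p/q$ in lowest terms; since $0<tx<1$ we have $0<r<1$, and $x=r/t$.

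\emph{Main step.} The idea is the one behind Theorem \ref{Main}: peel the orbit of $x$ apart using the density of $\{\beta^{k}t\bmod 1\}$ together with the rationality of $r$. Writing each iterate as $G^{n}(x)=\beta^{n}x-K_{n}$, where $K_{n}$ is the finite sum of powers of $\beta$ built from the leading greedy digits of $x$, and substituting $\beta^{n}x=\tfrac{r}{t^{2}}\beta^{n}t=\tfrac{r}{t^{2}}\bigl(\lfloor\beta^{n}t\rfloor+\{\beta^{n}t\}\bigr)$, one isolates a term of the form ``fixed rational $\times$ integer'', namely $\tfrac{r}{t^{2}}\lfloor\beta^{n}t\rfloor$, which runs through only finitely many residues modulo $1$, plus the term $\tfrac{r}{t^{2}}\{\beta^{n}t\}$ driven by the dense sequence $\{\beta^{n}t\}_{n\ge 0}$. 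Partitioning $\mathbb{N}$ into the finitely many classes $\mathcal{N}_{0},\dots,\mathcal{N}_{s-1}$ on which the rational term is constant modulo $1$, the closed sets $\overline{\{\{\beta^{n}t\}:n\in\mathcal{N}_{j}\}}$ cover $[0,1]$, so by Baire one of them, say for $j_{0}$, contains an open interval. Pushing that interval forward by the associated piecewise-affine map (of nonzero slope $r/t^{2}$, up to a rational shift) yields a nonempty open interval inside the orbit closure of $x$, hence inside $\overline{A}$, contradicting nowhere density. This gives $tA\subset\mathbb{Q}^{c}$; the statement $(1/t)A\subset\mathbb{Q}^{c}$ for $t>1$ follows by the same argument with $t$ replaced by $1/t$.

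\emph{The univoque application.} It is known that for $\beta_{*}<\beta<2$ the univoque set $U_{\beta}$ is uncountable, of Lebesgue measure zero, and nowhere dense; and $U_{\beta}$ is $G$-invariant, because if $y$ has a unique $\beta$-expansion then so does $G(y)$ --- a second expansion of $G(y)$, prefixed by the leading digit of $y$, would give a second expansion of $y$. Moreover every nontrivial element of $U_{\beta}$ lies in $\bigl(\tfrac{2-\beta}{\beta-1},1\bigr)$, so $A=U_{\beta}\cap[\tfrac{2-\beta}{\beta-1},1]$ differs from $U_{\beta}$ only at the two trivial endpoints and is again a nowhere dense $G$-invariant set. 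Since $G$ is ergodic for a measure equivalent to Lebesgue measure, Lebesgue-almost every $t$ satisfies that $\{\beta^{k}t\bmod 1:k\ge 0\}$ is dense in $[0,1)$; picking such a $t$ and invoking the first two parts gives $tA\subset\mathbb{Q}^{c}$ or $(1/t)A\subset\mathbb{Q}^{c}$.

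\emph{Where the difficulty lies.} The delicate point is exactly the ``rational $\times$ integer'' reduction in the main step. In the integer-base setting of Theorem \ref{Main} the digits of $t$ are honest integers, so the relevant term is visibly finite-valued modulo $1$; for a non-integer $\beta$ one is instead handling $\tfrac{r}{t^{2}}\lfloor\beta^{n}t\rfloor$ alongside the $\beta$-integer $K_{n}$, and simultaneously controlling these modulo $1$ --- equivalently, making precise which density hypothesis on $t$ (or on $1/t$) correctly feeds the decomposition --- is where the real work is. Once that step is in place, the Baire-category argument and the push-forward of an interval are routine, just as in Theorem \ref{Main}.
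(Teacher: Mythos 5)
Your opening reduction is sound as far as it goes: if $x\in A$ and the closure of $\{G^{n}(x):n\ge 0\}$ had nonempty interior, this would contradict the nowhere density of the invariant set $A$. The problem is that the step you yourself single out as ``where the real work is'' is not merely delicate --- as written it is false, and you supply no argument for it. You describe $\tfrac{r}{t^{2}}\lfloor\beta^{n}t\rfloor$ as ``fixed rational $\times$ integer''. But only $r$ is rational; $t$, hence $t^{2}$, is in general irrational (any $t$ with $\{\beta^{k}t\bmod 1\}$ dense will typically be), so $\tfrac{r}{t^{2}}$ is an irrational constant and $\tfrac{r}{t^{2}}\lfloor\beta^{n}t\rfloor\bmod 1$ runs through infinitely many values --- by Weyl it will usually equidistribute rather than be eventually periodic. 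Likewise $K_{n}=\sum_{i\le n}x_{i}\beta^{n-i}$ is a $\beta$-integer, not an integer, so $K_{n}\bmod 1$ is equally uncontrolled. Consequently the finite partition $\mathcal{N}_{0},\dots,\mathcal{N}_{s-1}$ on which ``the rational term is constant modulo $1$'' does not exist, the Baire-category step has nothing to apply to, and the main assertion is not proved. Acknowledging the gap does not close it.

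For comparison, the paper argues in the opposite direction: from $x/t=r$ it writes $t=r^{-1}x$, expands $\beta^{k}x=\sum_{i=1}^{k}\beta^{k-i}x_{i}+G^{k}(x)$ along the greedy expansion, and tries to trap $\{\beta^{k}t\bmod 1\}$ inside finitely many rational translates of the nowhere dense set $r^{-1}A$, contradicting the density hypothesis; the decisive point there is that $r^{-1}\sum_{i=1}^{k}\beta^{k-i}x_{i}\bmod 1$ takes only finitely many values, which is transparent for an integer base (Theorem \ref{Main}) but is exactly the issue you flagged for non-integer $\beta$. So you have correctly located the heart of the matter, but your proposal neither resolves it nor reproduces the paper's bookkeeping; it is an outline with the crucial step missing. (A smaller point: your claim that every nontrivial element of $U_{\beta}$ lies in $\bigl(\tfrac{2-\beta}{\beta-1},1\bigr)$ is unjustified; the paper instead uses that $U_{\beta}\cap\bigl[\tfrac{2-\beta}{\beta-1},1\bigr]$ is $G$-invariant and that every $U_{\beta}$-orbit eventually enters this interval.)
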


This paper is arranged as follows.
In Section 2, we give the proofs of main results.  In Section 3, we give some problems.
\section{Proofs of main results}
\begin{proof}[Proof of Theorem \ref{Main}]
We only prove $A+t, t^{-1}A\subset \mathbb{Q}^c$. The other two inclusions can be proved analogously.
First, we show  $A+t\subset \mathbb{Q}^c$.
Suppose that there exists some $x\in A$ such that
$$x+t=r\in \mathbb{Q}.$$
Then we consider
$$\{m^k(r-x)\mod1:k\geq 0\}=\{m^kt\mod1: k\geq 0\}.$$
 Note that
the right side of the above equation is dense in $[0,1]$.
This is because $t$ has a universal expansion.
For the left side, it is not dense. Therefore, we obtain a contradiction and finish the proof.

Now, we prove the left side of the above equation is not dense.
We let $$r=\kappa_1/\kappa_2,\kappa_i\in \mathbb{Z}.$$
Therefore,
$$m^k(r-x)\mod1 =(m^k r -m^k x)\mod1.$$
Since $x\in A$, there exists an $m$-adic expansion $$(x_i)\in \{0,1,2,\cdots,m-1\}^{\mathbb{N}}$$
such that
$$x=\sum_{i=1}^{\infty}\dfrac{x_i}{m^i}.$$
Since $A$ is invariant, it follows that $T(x)\in A.$
Therefore, $$x^{\prime}:=m^kx\mod 1
=T^k(x)=\sum_{i=1}^{\infty}\dfrac{x_{k+i}}{m^i}\in A.$$
Now, we have
$$m^k(r-x)\mod1 =(m^k r -m^k x)\mod1=m^k\dfrac{ \kappa_1}{\kappa_2} -x^{\prime}\mod 1.$$
Therefore,
$$\{m^k(r-x)\mod1:k\geq0\}\subset \cup_{j=-1}^{1} \cup_{i=0}^{2\kappa_2} \left(\dfrac{i}{\kappa_2}+j-A\right)\mod 1.$$
Here $a-F=\{a-x: x\in F\},a \in \mathbb{R}.$

It is well-known that any finite union of nowhere dense sets is still nowhere dense. Hence,
$\{m^k(r-x)\mod1:k\geq 0\}$ is not dense in $[0,1)$.
We finish the proof of $A+t\subset \mathbb{Q}^c.$

Next, we prove that $ t^{-1}A\subset \mathbb{Q}^c$. We still use some notation when we prove $A+t\subset \mathbb{Q}^c.$
Suppose that  there exists some $x\in A$ such that
$$t^{-1}x=r\in \mathbb{Q}.$$ We let $$r=\dfrac{\kappa_3}{\kappa_4},$$ where $\kappa_i\in \mathbb{Z}.$
Note that $$\{m^k(r^{-1}x)\mod1:k\geq 0\}=\{m^k(t)\mod1: k\geq 0\}.$$
The right side of the above equation is dense. Now, we prove that the left side is not dense.
Notice that
\begin{equation*}
\begin{aligned}
m^k(r^{-1}x)\mod1
&=& r^{-1}m^kx\mod1
= r^{-1}\left(m^kx\right)\mod1\\
&=&r^{-1}\left(\sum_{i=1}^{k}m^{k-i}x_i+T^k(x)\right)\mod1\\
&=&r^{-1}\left(\sum_{i=1}^{k}m^{k-i}x_i+x^{\prime}\right)\mod1\\
&\subset&\left(r^{-1}A+\dfrac{\kappa_4}{\kappa_3}M_k\right)\mod1,
\end{aligned}
\end{equation*}
where $M_k:=\sum_{i=1}^{k}m^{k-i}x_i\in \mathbb{N}^{+}$.
Note that
 $$  \left\{\left(r^{-1}A+\dfrac{\kappa_4}{\kappa_3}M_k\right)\mod1:k\geq0\right\}\subset   \left(\left\{r^{-1}A+\dfrac{i}{\kappa_3}\mod 1\right\}_{i=0}^{\kappa_3}\right).$$
 We observe that for each $i$,  $$r^{-1}A+\dfrac{i}{\kappa_3}\mod 1$$ is a nowhere dense set as $A$ is nowhere dense. Therefore,
$$\{m^k(r^{-1}x)\mod1:k\geq 0\}$$ is contained in a union of some nowhere dense sets.
Hence, we have proved that
$$\{m^k(r^{-1}x)\mod1:k\geq 0\}$$ is not dense in $[0,1]$.
\end{proof}
\begin{proof}[Proof of Corollary \ref{cor2}]
We  need to check  that the survival set is invariant. By the definition of survival set, we have that  for any $x\in A$, $T(x)\in A$. Therefore, $T(A)\subset A.$
By the definition of open dynamics, the survival set $A$ is closed. Therefore, to prove $A$ is nowhere dense,   it suffices to prove that $A$ does not have interior. Otherwise, if $A$ contains some interval, we denote it by $(\delta_1,\delta_2)$. It is well-known by the ergodic theorem that for Lebesgue almost every $x\in (\delta_1,\delta_2)$, the orbit of $x$ under the map $T$ is dense in $[0,1]$. This is clearly a contradiction as the the orbits of points from the survival set never hit the hole $(a,b)$. We finish the proof.
\end{proof}
\begin{proof}[Proof of Corollary \ref{cor3}]
The proof is similar to that of Theorem \ref{Main}. We leave it to the reader.
\end{proof}

\begin{proof}[Proof of Theorem  \ref{Main1}]
We recall two well-known results on  the sequences of uniform distribution \cite{Kuipers1974}.

\textbf{Claim1:}
 Let $q\in \mathbb{N}_{\geq 2}$, $t\in (0,1)$. Then
$$\{(q^kt ):k\geq 0\}$$ is  of uniform  distribution in $[0,1]$ if and only if $t$ is a normal number in base $q$, where $(q^kt )$ denotes the fractional part of $q^kt .$

\textbf{Claim2:}
If $$\{(q^kt ):k\geq 0\}$$ is of  uniform  distribution in $[0,1]$, then for any $s\in \mathbb{Z}\setminus \{0\}$, we have
$$\{(q^kts ):k\geq 0\}$$ is of  uniform  distribution in $[0,1]$.

We only prove $$A+it\subset \mathbb{Q}^c, i\neq 0.$$
If there is some $x\in A$ such that
$$x+it=r\in \mathbb{Q}.$$ Then we consider
\begin{equation}\label{1}
m^k(r-x)\mod 1=m^k it\mod 1, k\geq 0.
\end{equation}
Since $t$ is normal, it follows by Claim 1 and Claim 2 that
$$\{m^k it:k\geq 0\}$$ is  uniformly distributed modulo $1$.  However,  the left side of equation (\ref{1}) is not dense in $[0,1]$ (the  reader may refer to the proof of  Theorem \ref{Main}). We find a contradiction.
\end{proof}
\begin{proof}[Proof of Theorem  \ref{Main2}]
We only prove  $K+t\subset \mathbb{Q}^c$. The other two cases can be proved in a similar way.
Suppose on contrary that
there exists some $x\in K$ such that
$$x+t=r\in \mathbb{Q}. $$
Then we have
$$q^k(r-x)\mod 1=q^kt\mod 1.$$
Since
$x\in K$, it follows that there exists a greedy expansion (roughly speaking, a greedy expansion is largest in the sense of lexicographical order, the reader may refer to \cite{KarmaMartijn} for more information) $(b_i)\in \{a_1,a_2,\cdots, a_n\}^{\mathbb{N}}$
such that
$$x=\sum_{k=1}^{\infty}\dfrac{b_k}{q^k}.$$
Therefore,
$$\{q^k(r-x)\mod 1:k\geq 0\}\subset  (F-K(\mod 1))\cap [0,1],$$ where
$$
F=\left(\bigcup_{t_i,k\in \mathbb{N}, 0\leq i\leq n}( q^kr- (t_1a_1+t_2a_2+\cdots t_na_n))\right),$$
and $F-K(\mod1)=\{y-x\mod1:x\in K, y\in F\}.$
However,  we can only take finite values from $F(\mod1)=\{x\mod 1:x\in F\}$. This is because
 $a_i, r\in \mathbb{Q}, 1\leq i\leq n$. We may assume
 $$r=\dfrac{C}{D}, a_i=\dfrac{B_i}{A_i}, $$ where
 $$C,D,A_i, B,B_i\in \mathbb{Z}^{+}.$$
 Note that $K\subset [0,1]$. Hence, if we take some $y\in F$ which is  bigger than $2$, then
 $$y-K(\mod 1)=y-1-K(\mod 1).$$
 In other words, when we consider $F-K(\mod1)$, we can  take only  finitely many values from $F$. More precisely,
 we may take  at most $$2
 D(\Pi_{i=1}^{n}A_i)+1$$ numbers from $F$.

 Hence,  $F-K(\mod1)$ is contained in a union of finitely many nowhere dense sets.
Therefore,  $\{q^k(r-x)\mod 1:k\geq 0\}$ is also nowhere dense. This contradicts with the fact
 $$\{q^kt\mod 1:k\geq 0\}$$ is dense in $[0,1].$
\end{proof}
\begin{proof}[Proof of Theorem \ref{Multiplication}]
We only prove $ t^{-1}A\subset \mathbb{Q}^c$.  The other one can be proved in a similar way. 
Suppose that  there exists some $x\in A$ such that
$$t^{-1}x=r\in \mathbb{Q}.$$ We let $$r=\dfrac{\kappa_5}{\kappa_6},$$ where $\kappa_i\in \mathbb{Z}.$
Note that $$\{\beta^k(r^{-1}x)\mod1:k\geq 0\}=\{\beta^kt\mod1: k\geq 0\}.$$
The right side of the above equation is dense. The left side, however, is not. Now, we prove that the left side is not dense.
Notice that
\begin{equation*}
\begin{aligned}
\beta^k(r^{-1}x)\mod1
&=& r^{-1}\beta^kx\mod1
= r^{-1}\left(\beta^kx\right)\mod1\\
&=&r^{-1}\left(\sum_{i=1}^{k}\beta^{k-i}x_i+G^k(x)\right)\mod1\\
&=&r^{-1}\left(\sum_{i=1}^{k}\beta^{k-i}x_i+x^{\prime}\right)\mod1\\
&\subset&\left(r^{-1}A+\dfrac{\kappa_6}{\kappa_5}M_k\right)\mod1,
\end{aligned}
\end{equation*}
where $x=\sum_{i=1}^{\infty}\dfrac{x_i}{\beta^i}$ has a greedy expansion $(x_i)$ in base $\beta$ (\cite{KarmaMartijn}),    and $$M_k:=\sum_{i=1}^{k}m^{k-i}x_i\in \mathbb{N}^{+}.$$
Note that
 $$  \left\{\left(r^{-1}A+\dfrac{\kappa_6}{\kappa_5}M_k\right)\mod1:k\geq0\right\}\subset   \left(\left\{r^{-1}A+\dfrac{i}{\kappa_5}\mod 1\right\}_{i=0}^{\kappa_5}\right).$$
 We observe that for each $i$,  $$r^{-1}A+\dfrac{i}{\kappa_5}\mod 1$$ is a nowhere dense set as $A$ is nowhere dense. Therefore,
$$\{\beta^k(r^{-1}x)\mod1:k\geq 0\}$$ is contained in a union of some nowhere dense sets.
Hence, we have proved that
$$\{\beta^k(r^{-1}x)\mod1:k\geq 0\}$$ is not dense in $[0,1]$.

For the univoque set, we  have the following simple properties.
First, we always have 
$$G\left(U_{\beta}\cap \left[\dfrac{2-\beta}{\beta-1},1\right]\right)\subset U_{\beta}\cap \left[\dfrac{2-\beta}{\beta-1},1\right].$$
Next,  for any $x\in U_{\beta}$, there exists some $k\in \mathbb{N}$ such that for any $i\geq k$ $$G^{i}(x)\in \left[\dfrac{2-\beta}{\beta-1},1\right].$$
In terms of the above two properties, we have the desired results for the univoque sets.
\end{proof}
\section{Some questions}
We pose  some problems.
\begin{itemize}
\item [(1)] How can we find a $t$ such that $C+t$ only consists of transcendental numbers, where $C$ is the ternary Cantor set.
\item [(2)] For  any two non-trivial  invariant sets, denoted by $A$ and $B$,  under the maps $T_2$ and $T_3$, respectively, then how can we find a  concrete uniform  $t$ such that
$$A+t, B+t\subset \mathbb{Q}^c,$$
where $T_2(x)=2x\mod 1 \mbox{ and } T_3(x)=3x\mod 1$ are defined on $[0,1)$.
\item [(3)]
In Theorem \ref{Main}, we do not know whether   for any $t\in (0,1)$ such that
$t$ has a universal $m$-adic expansion, we always have
$$ tA=\{ta:a\in A\setminus\{0\}\}\subset  \mathbb{Q}^c.$$
\end{itemize}
For the last question,
the only thing we know is the following fact from uniform distribution modulo one.
Let
$q\in \mathbb{N}_{\geq 2}$. Then for Lebesgue almost every $a$,
$$\{q^ka\mod 1:k\geq 0\}$$ is  of uniform distribution  in the unit interval.
Therefore,  for Lebesgue almost every $a$, we simultaneously have
that
$$\{q^ka\mod 1:k\geq 0\}$$
and
$$\{q^ka^{-1}\mod 1:k\geq 0\}$$ is  of uniform distribution in $[0,1]$.
By the above analysis, we  claim that for Lebesgue almost every $t\in (0,1)$,
$$A-t \subset  \mathbb{Q}^c,A+t\subset \mathbb{Q}^c,  tA\subset \mathbb{Q}^c,\dfrac{A}{t}\subset  \mathbb{Q}^c.$$
Throughout the paper, we essentially analyze the following problem.
Let $$\beta\in (1, \infty)$$ be a real, and $r$ be a rational number.
Then when do we have that  the following set
$$\{\beta^kr\mod 1:k\geq 0\}$$ takes only finitely many values. More generally, how can we find the non-trivial lower and upper bounds of the above set.
This is related to the theory of  sequences modulo one, see  \cite{Dubickas,Lagarias} and references therein.   For the above problem, it is natural to consider some Pisot numbers and rational numbers.
In \cite{Dubickas}, Dubickas proved that when $\beta$ is a Pisot number (an algebraic number is called a Pisot number if  all of whose other Galois conjugates have modulus strictly less than $1$) and $r$ is rational, then
$$\{\beta^kr\mod 1:k\geq 0\}$$ has finitely many limit points.

In \cite{Lagarias},   Flatto, Lagarias, and Pollington considered when $\beta$ is a rational number, they give the
 lower bound of the length of $$\{\beta^kr\mod 1:k\geq 0\}.$$ All these results are useful to our analysis on the translations of the  invariant sets and self-similar sets.

\section*{Acknowledgements}
 This work is
supported by K.C. Wong Magna Fund in Ningbo University.
This work is also supported by  Zhejiang Provincial Natural Science Foundation of China with
No.LY20A010009. The author would like to thank Simon Baker and Derong Kong for some   discussions on the uniform distribution of sequences.


\end{document}